\documentclass[12pt,a4paper,reqno]{amsart}
\allowdisplaybreaks

\usepackage[utf8]{inputenc}
\usepackage{amsmath,amssymb,amsthm,amsfonts,latexsym}
\usepackage{mathrsfs}
\usepackage{enumerate}
\usepackage{url}
\usepackage{cases}
\usepackage{hyperref}
\usepackage{etoolbox}

\hypersetup{colorlinks=true,citecolor=blue,linkcolor=blue,urlcolor=blue}
\numberwithin{equation}{section}

\newcommand{\N}{\mathbb N}
\newcommand{\Z}{\mathbb Z}
\newcommand{\R}{\mathbb R}

\theoremstyle{plain}
\newtheorem{theorem}{Theorem}
\newtheorem{lemma}{Lemma}
\newtheorem{proposition}{Proposition}

\newtheorem{conjecture}{Conjecture}
\theoremstyle{remark}

\makeatletter
\patchcmd{\@settitle}{\uppercasenonmath\@title}{}{}{}
\patchcmd{\@setauthors}{\MakeUppercase}{}{}{}
\patchcmd{\section}{\scshape}{}{}{}
\makeatother

\begin{document}

\title[An Improvement of Konstantoulas' Density Constant]
{An Improvement of Konstantoulas' Density Constant}

\author[H. Li and Z. Zhang]{Huixi Li and Zihan Zhang}

\address{(Huixi Li) School of Mathematical Sciences and LPMC, Nankai University, Tianjin
300071, People's Republic of China}
\email{lihuixi@nankai.edu.cn}

\address{(Zihan Zhang) School of Mathematical Sciences and LPMC, Nankai University, Tianjin
300071, People's Republic of China}
\email{2211056@mail.nankai.edu.cn}

\keywords{Erd\H{o}s--Tur\'an conjecture; additive bases; representation functions; generating functions}

\subjclass[2020]{11B13, 11B34}

\begin{abstract}
Let \(A\subseteq \N\), and define its ordered representation function
\[
r(n)=\#\{(a,b)\in A\times A:a+b=n\}.
\] 
The Erd\H{o}s--Tur\'an conjecture asserts that, if \(r(n)>0\) for all
sufficiently large \(n\), then \(r(n)\) is unbounded. Konstantoulas proved a
density-theoretic version: if the upper density of
\(E=\N\setminus(A+A)\) is less than \(1/10\), then
\(\limsup_{n\to\infty} r(n)>5\). 
In this paper, we improve Konstantoulas' constant to \(7/32\). 
We also prove that \(D(E)<1/2\) implies
\(\limsup_{n\to\infty} r(n)>3\), 
and give a conditional criterion forcing
\(\limsup_{n\to\infty} r(n)>7\).
\end{abstract}

\maketitle

\section{Introduction}

Throughout the paper, \(\N\) denotes the set of positive integers. A set
\(A\subseteq\N\) is called an asymptotic basis of order \(2\) if every
sufficiently large integer can be written as a sum of two elements of \(A\).
We write
\[
r(n)=\#\{(a,b)\in A\times A:a+b=n\}
\]
for the ordered representation function. The Erd\H{o}s--Tur\'an conjecture,
first proposed in 1941~\cite{ErTu}, asserts the following.

\begin{conjecture}[Erd\H{o}s-Tur\'an]\label{conj:ET}
If \(A\) is an asymptotic basis of order \(2\), then \(r(n)\) is unbounded.
Equivalently,
\[
\limsup_{n\to\infty} r(n)=\infty .
\]
\end{conjecture}

Even very small lower bounds for \(\limsup_{n\to\infty} r(n)\) are difficult. 
Grekos, Haddad, Helou and Pihko~\cite{Grekos} in 2003 proved that,
for bases of order \(2\) in the stronger sense that every positive integer
is represented, \(r(n)\ge 6\) for infinitely many \(n\).
Borwein, Choi and Chu~\cite{Borwein} in 2006 improved this lower bound from
\(6\) to \(8\), again under this stronger basis assumption.
These two results use the full-basis assumption in an essential
way and are computational in nature. Konstantoulas~\cite{Kons} considered a
different, density-theoretic setting: if the exceptional set $E=\N\setminus(A+A)$ 
has upper density less than \(1/10\), then \(\limsup_{n \to \infty} r(n)>5\). 

Several related variants have also been studied. 
In the setting of \(\N\), Chen~\cite{Chen2012} constructed an asymptotic basis of order \(2\) whose two-fold representation function has the minimal value on a density-one set. 
Over \(\Z\), Nathanson~\cite{Na} constructed arbitrarily sparse unique representation bases, showing that the direct integer analogue of the Erd\H{o}s--Tur\'an conjecture is false. 
Finite cyclic analogues over \(\Z/m\Z\) also exhibit bounded representation phenomena: Ruzsa~\cite{Ruzsa} introduced the underlying finite construction, Chen~\cite{Chen2008} proved the uniform bound \(R_m\le 288\) for Ruzsa's numbers, and later work of S\'andor--Yang~\cite{Sandor} and Ding--Zhao~\cite{Ding} refined the corresponding lower and upper bounds.

The present paper remains in the setting \(A\subseteq\N\). For \(E\subseteq\N\), write
\[
D(E)=\limsup_{n\to\infty}
\frac{|E\cap\{1,2,\ldots,n\}|}{n}
\]
for its upper density. Our first result improves Konstantoulas' constant.

\begin{theorem}\label{thm:main}
Let \(A\subseteq\N\), and let \(E=\N\setminus(A+A)\). If
\[
D(E)<\frac{7}{32},
\]
then
\[
\limsup_{n\to\infty} r(n)>5.
\]
\end{theorem}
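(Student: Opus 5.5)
Suppose, for contradiction, that \(r(n)\le 5\) for all sufficiently large \(n\). Since ordered representations come in symmetric pairs apart from the diagonal \(a=b\), the value \(r(n)\) has the same parity as \(\mathbf 1_{n\in 2A}\). Hence \(r(n)\in\{0,2,4\}\) for \(n\notin 2A\), and \(r(n)\in\{1,3,5\}\) for \(n\in 2A\). The plan is to use the generating function \(f(z)=\sum_{a\in A} z^a\) for \(0<z<1\), with \(z=e^{-1/N}\) and \(N\to\infty\). We compare
\[
f(z)^2=\sum_n r(n)z^n
\]
with \(f(z^2)=\sum_{a\in A} z^{2a}\), using the identity \(f(z)^2-f(z^2)=\sum_n (r(n)-\mathbf 1_{2A}(n))z^n\). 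The right-hand side has every coefficient in \(\{0,2,4\}\), so it equals \(2g(z)\), where \(g\) has coefficients in \(\{0,1,2\}\). The upper density hypothesis \(D(E)<7/32\) bounds the proportion of zero coefficients of \(f(z)^2\), and hence of \(g\) outside \(2A\), in Abel-weighted form: \(\sum_{n\in E} z^n\le (\delta+o(1))/(1-z)\) with \(\delta<7/32\). This follows from partial summation.

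The key steps, in order, are as follows.

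\emph{(i) Abel-weighted density bounds.} Write \(F=f(z)\), \(S=1/(1-z)\) and \(\alpha=\) the weighted density of \(A\), normalised so that \(F\approx\alpha' \sqrt{S}\)-type quantities make sense. Work with the exact weighted sums \(P_k=\sum_{n:\,r(n)=k} z^n\) for \(k=0,\dots,5\). Then \(\sum_k P_k=S\), \(\sum_k kP_k=F^2\), and \(P_1+P_3+P_5=f(z^2)\).

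\emph{(ii) A second-moment identity.} Compute \(\sum_n r(n)^2 z^n\) via \(\int_0^1|f(ze^{2\pi i\theta})|^4\,d\theta\), and bound it below by Cauchy–Schwarz on the circle: \(\int|f|^4\ge(\int|f|^2)^2=f(z^2)^2\). This should yield \(\sum k^2P_k\ge f(z^2)^2\). Alternatively, and more sharply, use that \(\sum_n r(n)^2 z^{2n}\) counts additive quadruples, which dominate the trivial ones. This gives a lower bound of the shape \(2F(z^2)^2-f(z^4)\), after adjusting so that all weights share the same \(z\).

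\emph{(iii) A linear program.} Use \(f(z^2)\le F\), the relation \(f(z^2)\asymp F^2/S\) (which comes from \(F^2\le 5S\) and \(f(z^2)\ge\) something obtained via Cauchy–Schwarz \(F^2\le S\,f(z^2)\)-type inequalities), together with \(P_0\le \delta S\) and the constraints above. Then maximise \(\delta\)-feasibility over the \(P_k\). The intended outcome is that the system is infeasible exactly when \(\delta<7/32\), with the extremal configuration concentrating mass on \(r\in\{0,1,5\}\) or \(\{0,2,4\}\).

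\textbf{Main obstacle.} The hard part is finding the correct auxiliary inequality that links \(f(z^2)\) (the odd-coefficient mass) to \(F^2/S\). The trivial bounds only recover Konstantoulas' \(1/10\). I would first try the Cauchy–Schwarz inequality \(F(z)^2\le f(z^2)\cdot\#\)-weighted count, specifically \(F(z)^2=\big(\sum_a z^a\big)^2\le \big(\sum_a z^{2a}\big)\cdot|A\text{-weighted}|\). Failing that, I would use the fourth-moment/additive-energy bound from step (ii), optimised by choosing the evaluation points \(z\) and \(z^2\) independently and then letting the ratio vary. Finally, I would check that the resulting LP threshold comes out to \(7/32\).
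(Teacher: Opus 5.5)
There is a genuine gap. None of the inequalities you propose can force a contradiction, and the one ingredient that does is missing.

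\emph{Your inequalities are vacuous.} Your step (ii) bound $\int_0^1|f(ze^{2\pi i\theta})|^4\,d\theta\ge\bigl(\int_0^1|f(ze^{2\pi i\theta})|^2\,d\theta\bigr)^2$ reads, by Parseval, $\sum_n r(n)^2z^{2n}\ge f(z^2)^2=\sum_n r(n)z^{2n}$ (note that both sides live at $z^2$). This holds term by term because $r(n)$ is a nonnegative integer. Likewise, your additive-quadruple bound $\sum_n r(n)^2z^{2n}\ge 2f(z^2)^2-f(z^4)$ is just the termwise inequality $r(n)^2\ge 2r(n)-\mathbf{1}_{2A}(n)$.

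\emph{The relation in step (iii) is false.} You claim $f(z^2)\asymp F^2/S$, but $F^2/S$ stays bounded. Meanwhile $f(z^2)^2=\sum_n r(n)z^{2n}$ has order $(1-z)^{-1}$, since $r(n)\le5$ eventually and $r(n)\ge1$ off $E$ with $D(E)<1$. So $f(z^2)\asymp S^{1/2}$. In particular $P_1+P_3+P_5=f(z^2)=o(S)$: the odd levels carry no Abel mass (Proposition~\ref{prop:odd}). You had the ingredients for this but did not draw the conclusion, and your predicted extremal configuration on $\{0,1,5\}$ cannot occur.

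\emph{Your linear program can never be infeasible.} Take the limiting data $P_2\sim S$, $P_j=o(S)$ for $j\ne2$, $F^2\sim2S$, $f(z^2)\asymp S^{1/2}$ (essentially $r(n)=2$ everywhere). These satisfy every constraint on your list, even with $\delta=0$.

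\emph{The missing idea} is to remove the main term before applying Cauchy--Schwarz. On $|z|=r$ one has $\frac1{2\pi}\int_{-\pi}^{\pi}|f(re^{i\theta})^2|\,d\theta=f(r^2)\asymp(1-r)^{-1/2}$, so the $L^1$ norm of $f^2$ is large. Write $f(z)^2=\frac{\lambda}{1-z}+\sum_j(j-\lambda)P_j(z)+(\text{polynomial})$ and use $\int_{-\pi}^{\pi}|1-re^{i\theta}|^{-1}\,d\theta=O(\log\frac1{1-r})$. Cauchy--Schwarz and Parseval (the $P_j$ have disjoint supports) then give
\[
f(r^2)\le\Bigl(\sum_j(j-\lambda)^2P_j(r^2)\Bigr)^{1/2}+o\bigl((1-r^2)^{-1/2}\bigr).
\]
In the limit this yields $\sum_j j\,\ell_j\le\sum_j(j-\lambda)^2\ell_j$.

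Your step (ii) is exactly the case $\lambda=0$ of this inequality, which is why it gives nothing. What rules out near-constant $r(n)$ is the freedom to subtract $\lambda/(1-z)$, whose $L^2$ norm is large but whose $L^1$ norm is only logarithmic.

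With the odd $\ell_j$ equal to $0$, $\ell_0+\ell_2+\ell_4=1$ and $\lambda=5/2$, the inequality becomes $8\ell_2+16\ell_4\le25\ell_0+\ell_2+9\ell_4$. This is $7(1-\ell_0)\le25\ell_0$, i.e.\ $\ell_0\ge7/32$, contradicting $\ell_0\le D(E)+\varepsilon<7/32$.
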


The improvement from \(1/10\) to \(7/32\) comes from one simple structural
observation. If \(r(n)\) is eventually bounded, then the levels on which
\(r(n)\) is odd have zero Abel density. This leaves only the even levels in the main term of the final \(L^1/L^2\) estimate. Optimizing the remaining
one-parameter inequality gives the constant \(7/32\).

The same argument gives the following lower-threshold analogue.
\begin{theorem}\label{thm:three}
Let \(A\subseteq\N\), and let \(E=\N\setminus(A+A)\). If
\[
D(E)<\frac12,
\]
then
\[
\limsup_{n\to\infty} r(n)>3.
\]
\end{theorem}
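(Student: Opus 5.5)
The plan is to argue by contradiction: assume \(r(n)\le 3\) for all \(n\ge n_0\), and show that this forces \(D(E)\ge \frac12\). The argument runs through the generating function \(f(z)=\sum_{a\in A}z^a\), following the scheme just described for Theorem~\ref{thm:main}: the odd levels are discarded first, and then the surviving even levels are played against the density hypothesis in an \(L^1/L^2\) estimate. The final step is not yet worked out, and I flag it below as the main obstacle.

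\textbf{Step 1 (size of \(f\)).} For \(0<x<1\) we have \(f(x)^2=\sum_n r(n)x^n\). Boundedness of \(r\) gives the upper bound
\[
f(x)^2\le \frac{3}{1-x}+O(1).
\]
Next, \(r(n)\ge 1\) for every \(n\in(A+A)\). Abel's lower density of a set dominates its lower asymptotic density, and the lower density of \(A+A\) is \(1-D(E)\). Hence
\[
f(x)^2\ge \frac{1-D(E)+o(1)}{1-x}.
\]
So \(f(x)\asymp(1-x)^{-1/2}\).

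\textbf{Step 2 (odd levels are negligible).} Pairing \((a,b)\) with \((b,a)\) shows that
\[
r(n)\equiv \mathbf 1[\,n=2a,\ a\in A\,]\pmod 2 .
\]
Let \(O\) be the set of \(n\) with \(r(n)\) odd. Then
\[
\sum_{n\in O}x^n\le f(x^2)\le f(x)=O\bigl((1-x)^{-1/2}\bigr)=o\bigl((1-x)^{-1}\bigr),
\]
so \(O\) has Abel density \(0\). Consequently, outside an Abel-null set, \(r(n)\in\{0,2\}\) for large \(n\). Writing \(\delta(x)=(1-x)\sum_{r(n)=2}x^n\), this gives
\[
f(x)^2=\frac{2\delta(x)+o(1)}{1-x},
\qquad
\sum_n r(n)^2x^n=\frac{4\delta(x)+o(1)}{1-x},
\qquad
\liminf_{x\to1^-}\delta(x)\ge 1-D(E)>\tfrac12 .
\]

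\textbf{Step 3 (the \(L^1/L^2\) inequality on the circle).} On \(|z|=\rho\), Parseval gives two norms of \(F=f^2\):
\[
\|F\|_{L^1}=\frac1{2\pi}\int_0^{2\pi}|f(\rho e^{i\theta})|^2\,d\theta=f(\rho^2)=O\bigl((1-\rho)^{-1/2}\bigr),
\qquad
\|F\|_{L^2}^2=\sum_n r(n)^2\rho^{2n}.
\]
I would feed the level structure from Step 2 into an inequality comparing these two norms with \(F(\rho)=\|F\|_{L^\infty}\). The point is that the coefficients of \(F\) take only the values \(0\) and \(2\) (up to a null set), on a set of relative density \(\delta>\frac12\). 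I would first try to realise this comparison by pairing \(F\) against a Fejér/Poisson-type test kernel \(K\) and using \(|\langle F,K\rangle|\le \|F\|_1\|K\|_\infty\). I would also exploit the parity splitting \(f=u+v\), where \(u\) and \(v\) are the sums over even and odd elements of \(A\). Evaluating at \(x\) and at \(-x\) separates the even and odd levels of \(F\); each class carries density at most \(\frac12\), with coefficient at most \(2\). This leads to a one-parameter inequality in \(\delta\). The aim is to show that this inequality fails for \(\delta>\frac12\), in the same way that the level set \(\{0,2,4\}\) leads to \(7/32\) in Theorem~\ref{thm:main}.

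\textbf{Main obstacle.} The whole difficulty is Step 3. Steps 1 and 2 are routine and exactly parallel the proof of Theorem~\ref{thm:main}. What remains unclear is choosing the test function or parity decomposition so that the \(L^1\) bound \((1-\rho)^{-1/2}\) and the \(L^2\) mass \(4\delta/(1-\rho^2)\) conflict precisely when \(\delta>\frac12\). With only the single nonzero even level \(2\), the optimization should collapse to a linear condition in \(\delta\) with threshold \(\frac12\). If the kernel approach does not give this sharp constant, my fallback is to reproduce Konstantoulas' original inequality verbatim and simply substitute the level set \(\{0,2\}\) in place of \(\{0,1,\dots,5\}\).
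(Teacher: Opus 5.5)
Your Steps 1 and 2 are correct; they are the paper's setup together with its proposition that the odd levels are Abel-null. The gap is Step 3. It carries the entire proof, you leave it open, and none of the routes you sketch can close it.

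\begin{itemize}
\item Uncentered Cauchy--Schwarz, $\|F\|_{L^1}\le\|F\|_{L^2}$, only gives $2\delta\le4\delta$. So $f(\rho^2)$ and $\sum_n r(n)^2\rho^{2n}\sim4\delta/(1-\rho^2)$ never conflict.
\item Pairing with a test kernel, $|\langle F,K\rangle|\le\|F\|_{L^1}\|K\|_{L^\infty}$, bounds $\|F\|_{L^1}$ from \emph{below}. But $\|F\|_{L^1}=f(\rho^2)$ is already known exactly, with $f(\rho^2)^2=\sum_n r(n)\rho^{2n}$. A contradiction needs an \emph{upper} bound on $\|F\|_{L^1}$, computed from the coefficients, that falls below this value.
\item Evaluating at real points $\pm x$ only sees $u^2+v^2$ and $2uv$. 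The available constraints $2uv\le u^2+v^2\le(1+o(1))/(1-x)$ give nothing better than $\delta\le1$, so the whole circle is genuinely needed.
\end{itemize}

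The missing idea is to subtract the mean before applying Cauchy--Schwarz. Write $F(z)=\frac{\lambda}{1-z}+G_\lambda(z)$, so that $G_\lambda$ has coefficients $r(n)-\lambda$. On $|z|=\rho$ the term $\frac{1}{1-z}$ has normalized $L^1$-norm only $O(\log\frac{1}{1-\rho})$. Hence
\[
f(\rho^2)=\|F\|_{L^1}\le\|G_\lambda\|_{L^2}+O\Bigl(\log\frac{1}{1-\rho}\Bigr),
\qquad
\|G_\lambda\|_{L^2}^2=\sum_{n\ge0}(r(n)-\lambda)^2\rho^{2n}.
\]
Take $\lambda=1$. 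Then $(r(n)-1)^2=1$ on the levels $0$ and $2$. The odd levels are Abel-null by your Step 2, and only finitely many $n$ have $r(n)>3$. Therefore $\|G_1\|_{L^2}^2=(1+o(1))/(1-\rho^2)$, while $f(\rho^2)^2=(2\delta(\rho^2)+o(1))/(1-\rho^2)$. Squaring gives $2\delta(\rho^2)\le1+o(1)$, which contradicts your $\liminf_{x\to1^-}\delta(x)>\frac12$.

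This is exactly the paper's one-parameter $L^1/L^2$ lemma with $h=1$ and $\lambda=1$. Optimizing instead at $\lambda=2\delta$ gives $2\delta\le4\delta(1-\delta)$, which has the same threshold $\frac12$. Your fallback of invoking Konstantoulas' inequality would amount to this argument. As written, however, the proposal defers precisely the step that carries the proof.
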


Theorem~\ref{thm:three} gives a quick, but weak, consequence for Sidon sets.
Here a Sidon set means that all unordered sums \(a+b\), \(a\le b\), are
distinct; equivalently, for the ordered representation function one has
\(r(n)\le 2\) for all \(n\). Thus Theorem~\ref{thm:three} 
excludes the possibility that
\(D(E)<1/2\) for an infinite Sidon set. This is much weaker than the
classical theorem of Erd\H{o}s~\cite{HalberstamRoth}: for some absolute
constant \(c\),
\[
\liminf_{n\to\infty}
\frac{\lvert A\cap \{1,2,\ldots,n\}\rvert}{n^{1/2}}(\log n)^{1/2}\leq c,
\]
which implies that an infinite Sidon set has
a subsequence along which \(|(A+A)\cap[1,n]|/n\to0\). 

The next result records the additional information on the level \(r(n)=4\) that is needed when the same method is pushed toward the stronger conclusion \(\limsup_{n \to \infty} r(n)>7\).
\begin{theorem}\label{thm:seven}
Let \(A\subseteq\N\), let \(E=\N\setminus(A+A)\), and put
\[
N_4=N_4(A)=\{n\in\N:r(n)=4\},\qquad
S_4(z)=\sum_{n\in N_4}z^n.
\]
Define the lower Abel density
\[
\underline\delta_4=\liminf_{t\to1^-}(1-t)S_4(t).
\]
If
\[
\underline\delta_4>\frac{1}{16}+3D(E),
\]
then
\[
\limsup_{n\to\infty} r(n)>7.
\]
\end{theorem}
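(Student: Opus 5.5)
The plan is to argue by contradiction, reusing the generating-function machinery behind Theorems~\ref{thm:main} and~\ref{thm:three}. Suppose \(r(n)\le 7\) for all \(n\ge n_0\). Put \(f(z)=\sum_{a\in A}z^a\), so that \(f(z)^2=\sum_n r(n)z^n\). Since \(r\) is eventually bounded, \(f(t)^2\le C/(1-t)\) as \(t\to1^-\), hence \(f(t)=O((1-t)^{-1/2})\). In particular \((1-t)f(t^2)\to0\).

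The first step is the parity observation from the introduction. By symmetry of ordered pairs, \(r(n)\) is odd exactly when \(n/2\in A\), so
\[
\sum_{r(n)\ \mathrm{odd}} t^n=f(t^2)=o\bigl((1-t)^{-1}\bigr).
\]
Thus the levels \(r(n)\in\{1,3,5,7\}\) have Abel density zero, and only the levels \(k\in\{0,2,4,6\}\) survive. For \(t\) along a sequence \(t_j\to1^-\) (passing to a subsequence), define the limiting weights
\[
\delta_k=\lim_j (1-t_j)\sum_{r(n)=k}t_j^n .
\]
These satisfy \(\delta_0+\delta_2+\delta_4+\delta_6=1\). The hypothesis on \(E\) gives \(\delta_0\le D(E)\), since the Abel upper density is at most the upper density. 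The hypothesis on \(N_4\) gives \(\delta_4\ge\underline\delta_4\). We are free to choose the sequence \(t_j\) at our convenience, for instance one realizing a \(\limsup\) of \((1-t)f(t)^2\).

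The second step is to feed these weights into the same \(L^1/L^2\) inequality used for Theorem~\ref{thm:main}. This compares the first moment \((1-t)f(t)^2=(1-t)\sum r(n)t^n\to\sum_k k\delta_k\) with the second moment obtained from Parseval on the circle of radius \(t\):
\[
\int_0^1|f(te^{2\pi i\theta})|^4\,d\theta=\sum_n r(n)^2t^{2n},
\qquad
\int_0^1|f(te^{2\pi i\theta})|^2\,d\theta=f(t^2).
\]
The second identity is \(o\bigl((1-t)^{-1}\bigr)\) by the first step, which is the extra leverage. I expect the outcome to be one inequality, linear in the \(\delta_k\) and in \(D(E)\), of the same one-parameter shape as in Theorem~\ref{thm:main}, roughly of the form \(\sum_k \phi_\lambda(k)\delta_k\le c_\lambda D(E)\) for a free parameter \(\lambda\). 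Here \(\phi_\lambda\) is a quadratic in \(k\), coming from the combination of the first and second moments. In the case of bound \(5\), the surviving even levels are \(0,2,4\), and the threshold \(7/32\) arises there.

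The final step is a small linear program over the even levels \(\{0,2,4,6\}\). First eliminate \(\delta_6\) using \(\sum\delta_k=1\), then \(\delta_2\). After this, the inequality should reduce, for a suitable choice of \(\lambda\) (I would try the choice making \(\phi_\lambda(2)=\phi_\lambda(6)\), so that these two levels enter symmetrically), to something like \(\delta_4\le\frac1{16}+3\delta_0\). Together with \(\delta_0\le D(E)\) and \(\delta_4\ge\underline\delta_4\), this contradicts the hypothesis \(\underline\delta_4>\frac1{16}+3D(E)\).

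The main obstacle is this last optimization. With levels up to \(6\), the level \(4\) is no longer extremal, so the inequality does not by itself force a contradiction. One must find the value of \(\lambda\) for which the coefficient of \(\delta_4\) is the only one of the wrong sign and the constants come out exactly as \(1/16\) and \(3\). I would solve for \(\lambda\) by requiring the coefficients of \(\delta_2\) and \(\delta_6\) to vanish, or be favourable, after normalization, and then read off the constants.
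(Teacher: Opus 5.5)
Your plan is correct and is essentially the paper's proof: parity removes the odd levels, Lemma~\ref{lem:l1l2} gives $\sum_k k\delta_k\le\sum_k(k-\lambda)^2\delta_k$ over $k\in\{0,2,4,6\}$, and your normalization $\phi_\lambda(2)=\phi_\lambda(6)$ with $\phi_\lambda(k)=k-(k-\lambda)^2$ forces $\lambda=7/2$, which gives coefficients $-\tfrac{49}{4},-\tfrac14,\tfrac{15}{4},-\tfrac14$ on $\delta_0,\delta_2,\delta_4,\delta_6$ and hence, after using $\sum_k\delta_k=1$, exactly $\delta_4\le\tfrac1{16}+3\delta_0$. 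The paper instead takes the optimal $\lambda=\mu=\sum_k k\ell_k$ and then uses $\mu^2-7\mu+12\ge-\tfrac14$ (minimum at $\mu=7/2$), which is the same condition; the one slip in your sketch is calling $f(t^2)=o((1-t)^{-1})$ the ``extra leverage'', since in the $L^1/L^2$ step $f(t^2)$ is the $L^1$ norm of $f^2$ with exact order $(1-t)^{-1/2}$, and the parity gain enters only through $\delta_k=0$ for odd $k$.
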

The paper is organized as follows. In Section~\ref{sec:setup} we record the common generating-function setup
and prove the vanishing of the odd levels. In Section~\ref{sec:proofs} we
prove Theorems~\ref{thm:main} and~\ref{thm:three}, and explain why the constants in the statement of Theorems~\ref{thm:main} and~\ref{thm:three} cannot be improved within this one-parameter method.
Finally, in Section~\ref{sec:thresholds} we discuss higher representation
thresholds and prove Theorem~\ref{thm:seven}.

\section{The Generating-Function Setup}\label{sec:setup}

Let \(A\subseteq\N\). Suppose, for contradiction, that
\[
r(n)\le k
\]
for all sufficiently large \(n\). For \(0\le j\le k\), let
\[
N_j=N_j(A)=\{n\in\N:r(n)=j\},
\qquad
S_j(z)=\sum_{n\in N_j} z^n.
\]
Then \(N_0=E\). Let
\[
g(z)=\sum_{a\in A} z^a
\]
be the generating function of \(A\). After absorbing finitely many initial
terms into polynomials, we have
\begin{align}
S_0(z)+\sum_{i=1}^k S_i(z)
  &=\frac{1}{1-z}-P_{1,k}(z), \label{eq:partition}\\
g(z)^2
  &=P_{2,k}(z)+\sum_{i=1}^k iS_i(z), \label{eq:representation}
\end{align}
where \(P_{1,k}\) and \(P_{2,k}\) are polynomials.

We shall use the following elementary Abel-density bound.
\begin{lemma}\label{lem:abel-density}
Let \(E\subseteq\N\), let \(S_E(t)=\sum_{n\in E}t^n\), and let \(D(E)\) be
the upper density of \(E\). Then
\[
\limsup_{t\to1^-}(1-t)S_E(t)\le D(E).
\]
\end{lemma}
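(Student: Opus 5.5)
The plan is to rewrite the power series through the counting function and apply summation by parts. Put
\[
E(n)=|E\cap\{1,\ldots,n\}|, \qquad E(0)=0 .
\]
For \(0<t<1\), Abel summation (equivalently, multiplying the identity \(\sum_{n\ge1}E(n)t^n=S_E(t)/(1-t)\) by \(1-t\)) gives
\[
S_E(t)=\sum_{n\ge1}\bigl(E(n)-E(n-1)\bigr)t^n=(1-t)\sum_{n\ge1}E(n)t^n .
\]
The boundary term vanishes because \(E(n)\le n\) and \(nt^n\to0\). Hence
\[
(1-t)S_E(t)=(1-t)^2\sum_{n\ge1}E(n)t^n ,
\]
and the problem becomes that of controlling a weighted average of \(E(n)/n\).

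Fix \(\varepsilon>0\). By the definition of upper density there is \(n_0\) such that \(E(n)\le (D(E)+\varepsilon)n\) for all \(n\ge n_0\). Split the sum at \(n_0\). The finite part is at most \(\sum_{n<n_0}n\), a constant \(C(n_0)\), so after multiplying by \((1-t)^2\) it tends to \(0\) as \(t\to1^-\). For the tail we use the bound on \(E(n)\) together with the identity
\[
\sum_{n\ge1}nt^n=\frac{t}{(1-t)^2}.
\]
This gives
\[
(1-t)^2\sum_{n\ge n_0}E(n)t^n\le (D(E)+\varepsilon)\,t\le D(E)+\varepsilon .
\]
Taking \(\limsup_{t\to1^-}\) and then letting \(\varepsilon\to0\) yields the inequality stated in Lemma~\ref{lem:abel-density}.

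There is no serious obstacle. The only points needing care are two. First, the boundary term in the summation by parts must vanish, which it does since \(E(n)\le n\) and \(nt^n\to0\). Second, the initial segment must contribute nothing in the limit, which holds because its contribution is a fixed constant \(C(n_0)\) multiplied by \((1-t)^2\to0\).

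Alternatively, one could avoid the counting function altogether. Write \(\mathbf 1_E(n)\) for the indicator of \(E\), split \(\sum_n \mathbf 1_E(n)t^n\) into dyadic blocks of length about \(1/(1-t)\), and bound each block by the density. The summation-by-parts route is cleaner, so that is the one I would present.
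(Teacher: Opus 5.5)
Your proposal is correct and follows essentially the same route as the paper. Both arguments write \(S_E(t)=(1-t)\sum_{n\ge1}E(n)t^n\), bound \(E(n)\le (D(E)+\varepsilon)n\) for \(n\ge n_0\), discard the finitely many initial terms, and let \(\varepsilon\to0\); you simply spell out details the paper leaves implicit, namely the tail bound \((D(E)+\varepsilon)t\) via \(\sum_{n\ge1}nt^n=t/(1-t)^2\) and the vanishing of the initial segment under the factor \((1-t)^2\).
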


\begin{proof}
See \cite[Lemma 2 and Corollary 3]{Kons}. Let \(E(x)=|E\cap\{1,\ldots,x\}|\). Fix \(\eta>0\). For all sufficiently
large \(n\), we have \(E(n)\le (D(E)+\eta)n\). Since
\[
S_E(t)=(1-t)\sum_{n\ge1}E(n)t^n,
\]
it follows, after separating finitely many initial terms, that
\[
\limsup_{t\to1^-}(1-t)S_E(t)\le D(E)+\eta.
\]
Letting \(\eta\to0\) proves the lemma.
\end{proof}

Fix \(\varepsilon>0\). By Lemma~\ref{lem:abel-density}, there is a sequence
\(r_m\to1^-\) such that
\[
S_0(r_m^2)\le \frac{D(E)+\varepsilon}{1-r_m^2}.
\]
Passing to a subsequence if necessary, we may assume that all limits
\[
\ell_j=\lim_{m\to\infty} (1-r_m^2)S_j(r_m^2),
\qquad 0\le j\le k,
\]
exist. Then \eqref{eq:partition} gives
\begin{equation}\label{eq:ellsum}
\sum_{j=0}^k \ell_j=1,
\qquad
\ell_0\le D(E)+\varepsilon.
\end{equation}

The following proposition is the reason the constant in Konstantoulas' theorem can be improved here: once the odd representation levels have zero limiting mass, the final optimization is carried out only over the even levels.
\begin{proposition}\label{prop:odd}
For each odd \(j\), \(1\le j\le k\), one has
\[
\ell_j=0.
\]
\end{proposition}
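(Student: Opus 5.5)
The plan is to use the parity of the ordered representation function. For \(n\in\N\), the pairs \((a,b)\in A\times A\) with \(a+b=n\) and \(a\neq b\) come in swapped pairs \((a,b),(b,a)\). Hence
\[
r(n)\equiv \#\{a\in A: 2a=n\}\pmod 2 ,
\]
so \(r(n)\) is odd exactly when \(n\) is even and \(n/2\in A\). Consequently, for each odd \(j\), the set \(N_j\) is contained in \(2A=\{2a:a\in A\}\). This gives the coefficientwise bound, for \(0<t<1\),
\[
\sum_{j\ \mathrm{odd}} S_j(t)\;\le\; \sum_{a\in A} t^{2a} \;=\; g(t^2).
\]
All coefficients are nonnegative, so each individual \(S_j(t)\) with \(j\) odd is bounded by \(g(t^2)\).

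Next I will bound \(g\) near \(1\) using \eqref{eq:representation}. Since \(r(n)\le k\) for large \(n\), the identity gives, for \(0<t<1\),
\[
g(t)^2 \le P_{2,k}(t)+k\sum_{i=1}^k S_i(t)\le C+\frac{k}{1-t}
\]
for some constant \(C\). Here I absorb the polynomial into a constant on \([0,1]\) and use \(\sum_i S_i(t)\le \sum_n t^n\le 1/(1-t)\). Hence \(g(t)\le C'(1-t)^{-1/2}\) for \(t\) near \(1\).

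Finally I apply this along the sequence \(r_m\). Put \(t=r_m^2\), so that \(g(t^2)=g(r_m^4)\). The first step gives
\[
(1-r_m^2)S_j(r_m^2)\le (1-r_m^2)\,g(r_m^4)\le C'(1-r_m^2)(1-r_m^4)^{-1/2}\le C'(1-r_m^2)^{1/2},
\]
where the last inequality uses \(1-r_m^4\ge 1-r_m^2\). The right-hand side tends to \(0\) as \(r_m\to1^-\), so \(\ell_j=0\) for every odd \(j\). The finitely many small \(n\) where \(r(n)\) may exceed \(k\) do not matter, since only finitely many terms are affected and they are killed by the factor \(1-r_m^2\).

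The only genuinely nontrivial step is the parity observation. It is what makes the odd levels sit inside the sparse set \(2A\), whose generating function is of size only \((1-t)^{-1/2}\) and not \((1-t)^{-1}\). The rest is the routine square-root growth bound for \(g\) that follows from bounded representations.
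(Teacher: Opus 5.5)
Your proof is correct and follows the paper's argument. The parity observation places the odd levels inside \(2A\), so \(\sum_{j\ \mathrm{odd}}S_j(t)\le g(t^2)=O\bigl((1-t)^{-1/2}\bigr)\) by the bounded-representation bound on \(g\); the only cosmetic difference is that you use this coefficientwise inequality directly, while the paper writes it as an identity with a correcting polynomial \(P_{3,k}\) for the finitely many \(n\) with \(r(n)>k\).
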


\begin{proof}
For each \(n\), the non-diagonal representations \(n=a+b\), \(a\ne b\), occur in pairs \((a,b)\) and \((b,a)\). Hence the parity of \(r(n)\) is determined by the diagonal representation \(n=2a\), if such a representation exists. Apart from the finitely many \(n\) for which \(r(n)>k\), this gives
\begin{equation}\label{eq:oddidentity}
\sum_{\substack{1\le i\le k\\ i\ {\rm odd}}}S_i(z)=g(z^2)-P_{3,k}(z),
\end{equation}
where \(P_{3,k}\) is a polynomial accounting for the exceptional initial terms.

For \(0<r<1\), \eqref{eq:oddidentity} gives
\[
\sum_{\substack{1\le i\le k\\ i\ {\rm odd}}}S_i(r^2)
\le g(r^4)+O(1).
\]
On the other hand, the assumption \(r(n)\le k\) eventually implies
\[
g(r)^2=\sum_{n\ge0}r(n)r^n=O\left(\frac{1}{1-r}\right)
\qquad (r\to1^-).
\]
Hence \(g(r^4)=O((1-r^4)^{-1/2})\). Therefore
\[
(1-r^2)\sum_{\substack{1\le i\le k\\ i\ {\rm odd}}}S_i(r^2)\to0
\qquad (r\to1^-),
\]
and so every odd \(\ell_j\) is zero.
\end{proof}

We shall also use the following one-parameter form of Konstantoulas'
\(L^1/L^2\) estimate.

\begin{lemma}\label{lem:l1l2}
Assume that \(r(n)\le 2h+1\) for all sufficiently large \(n\). Let
\(\lambda\in\R\), and suppose that \(\sum_{s=1}^h 2s\,\ell_{2s}>0\). With
the notation above, we have 
\begin{equation}\label{eq:l1l2-general}
\sqrt{
\frac{\displaystyle
\sum_{s=0}^h (2s-\lambda)^2\ell_{2s}}
{\displaystyle
\sum_{s=1}^h 2s\,\ell_{2s}}
} \ge 1.
\end{equation}
\end{lemma}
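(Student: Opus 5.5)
The plan is to run Konstantoulas' $L^1/L^2$ comparison on a circle of radius $r=r_m$, applied to the shifted function
\[
F_\lambda(z)=g(z)^2-\frac{\lambda}{1-z}.
\]
Proposition~\ref{prop:odd} will then remove the odd levels. Note that (\ref{eq:l1l2-general}) is equivalent to
\[
\sum_{j=0}^k j\,\ell_j\;\le\;\sum_{j=0}^k (j-\lambda)^2\ell_j ,
\]
with $k=2h+1$. By Proposition~\ref{prop:odd} both sides reduce to sums over even $j$, which gives exactly the stated ratio, using the hypothesis that the denominator is positive. So the work is to prove this inequality before passing to the limit.

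\emph{$L^2$ side.} By (\ref{eq:representation}) and (\ref{eq:partition}), $F_\lambda(z)=\sum_n (r(n)-\lambda)z^n$ plus a polynomial. Every sufficiently large $n$ lies in exactly one $N_j$ with $0\le j\le k$, and there $r(n)=j$. Parseval on $|z|=r$ then gives
\[
\frac1{2\pi}\int_0^{2\pi}|F_\lambda(re^{i\theta})|^2\,d\theta=\sum_{j=0}^k (j-\lambda)^2 S_j(r^2)+O(1).
\]

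\emph{$L^1$ side.} We have $|g(z)^2|=|g(z)|^2$, and Parseval gives $\frac1{2\pi}\int|g(re^{i\theta})|^2d\theta=g(r^2)$. Also $g(r^2)^2=\sum_n r(n)r^{2n}=\sum_j jS_j(r^2)+O(1)$. For the correction term, $\frac1{2\pi}\int|\lambda/(1-re^{i\theta})|\,d\theta=O(|\lambda|\log\frac1{1-r})$. The triangle inequality therefore gives
\[
\|F_\lambda\|_{L^1}\ge g(r^2)-O\Bigl(\log\tfrac1{1-r}\Bigr).
\]

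\emph{Combining.} Cauchy--Schwarz gives $\|F_\lambda\|_{L^1}^2\le\|F_\lambda\|_{L^2}^2$. Squaring the lower bound costs an error of size $O(g(r^2)\log\frac1{1-r})$. Since $g(r^2)^2=O((1-r)^{-1})$ by the boundedness of $r(n)$, this error is $O((1-r)^{-1/2}\log\frac1{1-r})$. Hence
\[
\sum_j jS_j(r^2)\le \sum_j(j-\lambda)^2S_j(r^2)+O\Bigl((1-r)^{-1/2}\log\tfrac1{1-r}\Bigr).
\]
Multiplying by $1-r^2$ and letting $r=r_m\to1^-$, the error term tends to $0$. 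By the definition of the $\ell_j$ this yields $\sum_j j\ell_j\le\sum_j(j-\lambda)^2\ell_j$.

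\emph{Conclusion.} Proposition~\ref{prop:odd} kills all odd $\ell_j$. Dividing by $\sum_{s\ge1}2s\,\ell_{2s}>0$ and taking square roots gives (\ref{eq:l1l2-general}).

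The main point needing care is the $L^1$ lower bound. One must check that the $\lambda/(1-z)$ correction is genuinely of lower order: its $L^1$ norm is only logarithmic, while $g(r^2)\asymp(1-r)^{-1/2}$. One must also check that the squared error, after normalization by $1-r^2$, vanishes. Everything else is bookkeeping of the finitely many exceptional $n$ through polynomials, which contribute $O(1)$ and disappear after normalization.
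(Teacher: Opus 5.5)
Your proposal is correct and follows essentially the paper's argument: it is the same $L^1/L^2$ comparison for $g(z)^2-\lambda/(1-z)$ on the circle $|z|=r_m$, in which the $\lambda/(1-z)$ term costs only $O(\log\frac{1}{1-r})$ against $g(r^2)=O((1-r)^{-1/2})$. The only difference is in the order of steps. You apply Parseval to all levels at once, obtain Konstantoulas' inequality $\sum_j j\ell_j\le\sum_j(j-\lambda)^2\ell_j$, and invoke Proposition~\ref{prop:odd} only at the end to delete the odd terms; the paper instead uses that proposition inside the $L^1$ estimate to discard the odd $S_j$ before applying Cauchy's inequality to the even part. Both routes give the identical limiting inequality, and yours makes explicit that the parity observation enters only by restricting the admissible $(\ell_j)$.
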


\begin{proof}
Put \(K=2h+1\). Combining \eqref{eq:partition} and \eqref{eq:representation},
we get
\begin{align}
g(z)^2
&=P_{2,K}(z)+\lambda\bigl(S_0(z)+\cdots+S_K(z)\bigr)
  +\sum_{j=0}^K(j-\lambda)S_j(z) \notag\\
&=\frac{\lambda}{1-z}+P_{2,K}(z)-\lambda P_{1,K}(z)
  +\sum_{j=0}^K(j-\lambda)S_j(z). \label{eq:lambda-decomp}
\end{align}

Let \(z=re^{i\theta}\). Since \(|g(z)^2|=|g(z)|^2\), we have 
\[
\int_{-\pi}^{\pi}|g(re^{i\theta})^2|\,d\theta
=\int_{-\pi}^{\pi}|g(re^{i\theta})|^2\,d\theta
=2\pi g(r^2).
\]
The polynomial terms in \eqref{eq:lambda-decomp} contribute \(O(1)\) to this
integral, and
\[
\int_{-\pi}^{\pi}\left|\frac{1}{1-re^{i\theta}}\right|\,d\theta
=O\left(\log\frac{1}{1-r}\right).
\]
By Proposition~\ref{prop:odd} and Cauchy's inequality,
\[
\int_{-\pi}^{\pi}|S_j(re^{i\theta})|\,d\theta
\le 2\pi S_j(r^2)^{1/2}
=o\bigl((1-r^2)^{-1/2}\bigr)
\]
for every odd \(j\), along the chosen sequence \(r=r_m\).

Thus the main contribution comes from the even levels. Cauchy's inequality
and the disjointness of the coefficient supports of \(S_0,S_2,\ldots,S_{2h}\)
give
\begin{align*}
\int_{-\pi}^{\pi}
\left|
\sum_{s=0}^h(2s-\lambda)S_{2s}(re^{i\theta})
\right|\,d\theta  
\qquad\le
2\pi
\left(
\sum_{s=0}^h(2s-\lambda)^2S_{2s}(r^2)
\right)^{1/2}.
\end{align*}
Consequently,
\[
g(r^2)
\le
\left(
\sum_{s=0}^h(2s-\lambda)^2S_{2s}(r^2)
\right)^{1/2}
+o\bigl((1-r^2)^{-1/2}\bigr)
\]
along the chosen sequence \(r=r_m\).

Finally, \eqref{eq:representation} and Proposition~\ref{prop:odd} imply
\[
(1-r^2)g(r^2)^2
\to
\sum_{s=1}^h 2s\,\ell_{2s}.
\]
Multiplying the preceding inequality by \((1-r^2)^{1/2}\) and letting
\(r=r_m\to1^-\), we obtain
\[
\left(\sum_{s=1}^h 2s\,\ell_{2s}\right)^{1/2}
\le
\left(\sum_{s=0}^h (2s-\lambda)^2\ell_{2s}\right)^{1/2}.
\]
Since the denominator is positive by hypothesis, this is
\eqref{eq:l1l2-general}.
\end{proof}

\section{Proofs of Theorems~\ref{thm:main} and~\ref{thm:three}}\label{sec:proofs}

\begin{proof}[Proof of Theorem~\ref{thm:main}]
Assume, to the contrary, that \(r(n)\le5\) for all sufficiently large \(n\).
Choose \(\varepsilon>0\) so small that
\[
D(E)+\varepsilon<\frac{7}{32}.
\]
Form the sequence \(r_m\to1^-\) and the limits \(\ell_j\) as in
Section~\ref{sec:setup}. By Proposition~\ref{prop:odd},
\[
\ell_1=\ell_3=\ell_5=0.
\]
Hence, by \eqref{eq:ellsum},
\[
\ell_0+\ell_2+\ell_4=1,
\qquad
\ell_0\le D(E)+\varepsilon<\frac{7}{32}.
\]
In particular,
\[
\ell_2+\ell_4=1-\ell_0>0,
\]
so the denominator \(2\ell_2+4\ell_4\) in Lemma~\ref{lem:l1l2} is positive.
Taking \(h=2\) and \(\lambda=5/2\) in Lemma~\ref{lem:l1l2}, we obtain
\begin{equation}\label{eq:main-lambda}
1\le
\sqrt{
\frac{
\frac{25}{4}\ell_0+\frac14\ell_2+\frac94\ell_4
}{
2\ell_2+4\ell_4
}
}.
\end{equation}
On the other hand,
\[
\frac{25}{4}\ell_0+\frac14\ell_2+\frac94\ell_4
<
2\ell_2+4\ell_4.
\]
Indeed, after multiplying by \(4\), this inequality is equivalent to
\[
25\ell_0+\ell_2+9\ell_4<8\ell_2+16\ell_4,
\]
or
\[
25\ell_0<7(\ell_2+\ell_4)=7(1-\ell_0),
\]
which is precisely \(\ell_0<7/32\). Thus the right hand side of
\eqref{eq:main-lambda} is strictly smaller than \(1\), a contradiction.
\end{proof}

\begin{proof}[Proof of Theorem~\ref{thm:three}]
Assume, to the contrary, that \(r(n)\le3\) for all sufficiently large \(n\).
Choose \(\varepsilon>0\) so small that
\[
D(E)+\varepsilon<\frac12.
\]
Form the sequence \(r_m\to1^-\) and the limits \(\ell_j\) as in
Section~\ref{sec:setup}. By Proposition~\ref{prop:odd},
\[
\ell_1=\ell_3=0.
\]
Hence, by \eqref{eq:ellsum},
\[
\ell_0+\ell_2=1,
\qquad
\ell_0\le D(E)+\varepsilon<\frac12.
\]
In particular, \(\ell_2=1-\ell_0>0\), so Lemma~\ref{lem:l1l2} applies.
Taking \(h=1\) and \(\lambda=1\), we get
\[
1\le
\sqrt{
\frac{\ell_0+\ell_2}{2\ell_2}
}
=
\sqrt{\frac{1}{2(1-\ell_0)}}.
\]
The last expression is strictly smaller than \(1\), a contradiction.
\end{proof}

We now explain why the constant \(7/32\) in Theorem~\ref{thm:main} cannot be improved merely by
changing the coefficient \(\lambda\) in Lemma~\ref{lem:l1l2} in the
one-parameter decomposition above.
After the odd classes have been shown to be negligible, the relevant data for
the proof of Theorem~\ref{thm:main} are
\[
\ell_0,\ell_2,\ell_4\ge0,
\qquad
\ell_0+\ell_2+\ell_4=1.
\]
The method needs a choice of \(\lambda\) for which
\begin{equation}\label{eq:needed}
\lambda^2\ell_0+(2-\lambda)^2\ell_2+(4-\lambda)^2\ell_4
<
2\ell_2+4\ell_4.
\end{equation}
For fixed \(\ell_0,\ell_2,\ell_4\), the left hand side is minimized when
\[
\lambda=2\ell_2+4\ell_4.
\]
The threshold \(7/32\) is sharp for this optimization. Indeed, take
\[
\ell_0=\frac{7}{32},\qquad
\ell_2=\frac{5}{16},\qquad
\ell_4=\frac{15}{32}.
\]
Then
\[
\ell_0+\ell_2+\ell_4=1,
\qquad
2\ell_2+4\ell_4=\frac52.
\]
The minimizing parameter is \(\lambda=5/2\), and
\[
\left(\frac52\right)^2\frac{7}{32}
+\left(2-\frac52\right)^2\frac{5}{16}
+\left(4-\frac52\right)^2\frac{15}{32}
=\frac52.
\]
Therefore equality holds in \eqref{eq:main-lambda}. The argument cannot
produce a contradiction at \(\ell_0=7/32\), and hence cannot prove any
threshold larger than \(7/32\) without additional information.

The same optimization explains the sharpness of the constant \(1/2\) in
Theorem~\ref{thm:three} for this method. In that case the limiting data are
\[
\ell_0,\ell_2\ge0,\qquad \ell_0+\ell_2=1,
\]
and Lemma~\ref{lem:l1l2} with \(h=1\) requires
\[
\min_{\lambda}\bigl(\lambda^2\ell_0+(2-\lambda)^2\ell_2\bigr)\ge 2\ell_2.
\]
The minimum is attained at \(\lambda=2\ell_2\) and equals
\[
4\ell_0\ell_2.
\]
Thus equality occurs when \(\ell_0=\ell_2=1/2\). Consequently this
one-parameter argument cannot prove Theorem~\ref{thm:three} with any density
threshold larger than \(1/2\).

\section{Proof of Theorem~\ref{thm:seven}}\label{sec:thresholds}

We recall the notation from Theorem~\ref{thm:seven}:
\[
N_4=N_4(A)=\{n\in\N:r(n)=4\},
\qquad
S_4(z)=\sum_{n\in N_4}z^n,
\]
and define the lower Abel density
\[
\underline\delta_4
=\liminf_{t\to1^-}(1-t)S_4(t).
\]
The calculation below shows that this method needs extra mass on the level
\(r(n)=4\) in order to force the stronger conclusion \(r(n)>7\).

\begin{proof}[Proof of Theorem~\ref{thm:seven}]
Assume, to the contrary, that \(r(n)\le7\) for all sufficiently large \(n\).
Choose \(\varepsilon>0\) so small that
\[
\underline\delta_4>\frac{1}{16}+3(D(E)+\varepsilon).
\]
As before, after passing to a suitable sequence \(r_m\to1^-\), all limits
\[
\ell_j=\lim_{m\to\infty}(1-r_m^2)S_j(r_m^2)
\]
exist for \(j=0,2,4,6\), and the odd classes are lower order. Hence
\[
\ell_0+\ell_2+\ell_4+\ell_6=1,
\qquad
\ell_0\le D(E)+\varepsilon,
\qquad
\ell_4\ge \underline\delta_4
\]
by the definition of the lower Abel density. In particular, \(\ell_4>0\), so
the denominator condition in Lemma~\ref{lem:l1l2} is satisfied.
Taking \(h=3\) in Lemma~\ref{lem:l1l2}, the optimized parameter argument
gives the necessary inequality
\begin{equation}\label{eq:higher-necessary}
\min_{\lambda}
\left(
\lambda^2\ell_0+(2-\lambda)^2\ell_2+(4-\lambda)^2\ell_4
+(6-\lambda)^2\ell_6
\right)
\ge
2\ell_2+4\ell_4+6\ell_6.
\end{equation}
Write
\[
\mu=2\ell_2+4\ell_4+6\ell_6.
\]
The minimum on the left of \eqref{eq:higher-necessary} is
\[
4\ell_2+16\ell_4+36\ell_6-\mu^2.
\]
Using \(\ell_0+\ell_2+\ell_4+\ell_6=1\), one checks that
\[
4\ell_2+16\ell_4+36\ell_6
=8\mu-12+12\ell_0-4\ell_4.
\]
Thus the inequality opposite to \eqref{eq:higher-necessary}, namely the
desired contradiction, is
\[
\mu^2-7\mu+12-12\ell_0+4\ell_4>0.
\]
The quadratic \(\mu^2-7\mu+12\) has global minimum \(-1/4\), attained at
\(\mu=7/2\). Therefore a sufficient condition for contradiction is
\[
-\frac14-12\ell_0+4\ell_4>0,
\]
that is,
\[
\ell_4>\frac{1}{16}+3\ell_0.
\]
Since \(\ell_0\le D(E)+\varepsilon\) and
\(\ell_4\ge\underline\delta_4\), our choice of \(\varepsilon\) gives
\[
\ell_4>\frac{1}{16}+3\ell_0,
\]
which yields the contradiction.
\end{proof}

In particular, for an asymptotic basis, where \(D(E)=0\), any counterexample
to the conclusion \(r(n)>7\) must satisfy
\[
\underline\delta_4\le\frac{1}{16}.
\]
Thus, if an asymptotic basis satisfied \(r(n)\le7\) eventually, then the
set of integers with exactly four ordered representations would have lower
Abel density at most \(1/16\).

The same calculation also explains why this one-parameter \(L^1/L^2\) method
cannot prove an unconditional theorem forcing \(r(n)>7\), even when
\(\ell_0=0\). For example, take
\[
\ell_2=\frac58,\qquad
\ell_4=0,\qquad
\ell_6=\frac38.
\]
Then
\[
2\ell_2+4\ell_4+6\ell_6=\frac72.
\]
The left hand side of \eqref{eq:higher-necessary} is minimized at
\[
\lambda=2\ell_2+4\ell_4+6\ell_6=\frac72,
\]
and the minimum is
\[
\left(2-\frac72\right)^2\frac58
+\left(6-\frac72\right)^2\frac38
=\frac{15}{4}.
\]
Since \(15/4>7/2\), no contradiction follows.

\section*{Acknowledgments and AI Disclosure}

Huixi Li's research is supported by the National Natural Science Foundation of China (Grant No. 12561001). 
The authors thank Yuchen Ding for helpful comments on Sidon sets.
OpenAI Codex was used as an auxiliary research tool; in particular, it helped identify the parity observation underlying Proposition~\ref{prop:odd}.

\end{document}